\newcommand{\lr}[2]{\langle #1, #2 \rangle}
\def\Re{\mathbb{R}}
\def\Sec#1{Sec.~\ref{#1}}
\def\notes#1{\marginpar{\tiny #1}\typeout{Notes!
Notes!
Notes!
}}
\renewcommand{\notes}[1]{\typeout{notes!}}
\def\Re{\field{R}}
\def\Sec#1{Sec.~\ref{#1}}
\def\eqdef{\mathrel{:=}}
\def\Sec#1{Sec~\ref{#1}}
\def\Expect{{\sf E}}
\def\Expect{{\sf E}}
\def\Sec#1{Sec.~\ref{#1}}
\def\eqdef{\mathrel{:=}}
\newtheorem{theorem}{Theorem}
\newtheorem{example}{Example}
\newtheorem{lemma}{Lemma}
\newtheorem{remark}{Remark}
\newtheorem{proposition}{Proposition}
\newtheorem{corollary}{Corollary}
\newtheorem{assumption}{Assumption}
\def\beq{\begin{eqnarray}} 
\def\bc{\begin{center}} 
\def\be{\begin{enumerate}}
\def\bi{\begin{itemize}} 
\def\bs{\begin{small}}
\def\bS{\begin{slide}}
\def\ec{\end{center}} 
\def\ee{\end{enumerate}}
\def\ei{\end{itemize}}
\def\es{\end{small}}
\def\eS{\end{slide}}
\def\eeq{\end{eqnarray}}
\newcommand{\ud}{\,\mathrm{d}}
\def\Re{\mathbb{R}}
\def\Sec#1{Sec.~\ref{#1}}
\def\Expect{{\sf E}}
\renewcommand{\Re}{\mathbb{R}}
\def\eqdef{\mathbin{:=}}
\newcommand{\Teps}{{T}_{\epsilon}}
\newcommand{\geps}{{g}_{\epsilon}}
\newcounter{rmnum}
\newenvironment{romannum}{\begin{list}{{\upshape (\roman{rmnum})}}{\usecounter{rmnum}
\setlength{\leftmargin}{14pt}
\setlength{\rightmargin}{8pt}
\setlength{\itemsep}{2pt}
\setlength{\itemindent}{-1pt}
}}{\end{list}}
\newcounter{anum}
\newcommand{\calX}{\mathcal{X}}
\newcommand{\calB}{\mathcal{B}}
\title{\LARGE \bf
	On the Lyapunov Foster criterion and Poincar\'e inequality \\ for
      Reversible Markov Chains}
\author{Amirhossein Taghvaei and Prashant G. Mehta
\thanks{A. Taghvaei is with the Department of Mechanical and Aerospace Engineering at University of California, Irvine
\tt\small ataghvae@uci.edu} 
\thanks{P. G. Mehta is with the Department of Mechanical Science and Engineering at the University of
Illinois at Urbana-Champaign (UIUC)
\tt\small  mehtapg@illinois.edu}% <-this % stops a space,
\thanks{Financial support from the NSF grant 1761622
	and the ARO grant W911NF1810334 is gratefully acknowledged.}%
}
\begin{document}
\normalem
\maketitle

\begin{abstract}
This paper presents an elementary proof of stochastic stability of a
discrete-time reversible Markov chain starting from a Foster-Lyapunov
drift condition.  Besides its relative simplicity, there are two
salient features of the proof: (i) it relies entirely on
functional-analytic non-probabilistic arguments; and (ii) it makes
explicit the connection between a Foster-Lyapunov function and
Poincar\'e inequality.  The proof is used to derive an explicit bound
for the spectral gap.  An extension to the non-reversible case is also
presented.  

\end{abstract}

\section{Introduction}

This paper presents an elementary functional-analytic proof of
stochastic stability of a discrete-time reversible Markov chain. The
main hypothesis is the existence of a Foster Lyapunov function, drift
condition (v4) in~\cite[Ch. 15, Ch. 16]{MT}. 
The main result is to establish Poincar\'e inequality and relate it to
a spectral gap under additional hypothesis.  The spectral gap yields
geometric convergence as an easy consequence.

The use of Lyapunov drift condition (v4) to establish geometric
convergence rate is standard in the theory of Markov chains;
cf.,~\cite{MT} and references therein.  It is known that the geometric
ergodicity is equivalent to a spectral gap for the corresponding
Markov operator in a certain normed vector space
$L^V_\infty$~\cite{kontoyiannis2003spectral}.  
The spectral gap in $L^V_\infty$ implies a spectral gap in $L^2$ for
reversible Markov chains~\cite{roberts1997geometric}.  Explicit bounds
on the convergence rate are obtained
in~\cite{meyn1994computable,rosenthal1995minorization}. However, in a
general setting, the existing bounds can be difficult to compute.

% The Lyapunov drift condition is also extended to continuous-time Markov chains~\cite{down1995exponential,kontoyiannis2005large}. 

The techniques and tools used in~\cite{MT} and the related literature
are probabilistic in nature.  In contrast, the short proof in this
paper is entirely analytical and relies on elementary arguments.  The
key is to use the Lyapunov Foster condition (v4) to derive a Poincar\'e
inequality.  This is then related to existence of the spectral gap from
which the convergence result follows.  
The approach of this paper is inspired
by~\cite{bakry08,bakry2008rate}~\cite[Ch. 4]{bakry2013} where Lyapunov
function is related to Poincar\'e
inequality for a continuous-time Markov processes.
To the best of our knowledge, the extension of this connection,
between Poincar\'e inequality and Lyapunov function, in discrete-time
setting is not known.  Given the
elementary nature of the proof and the explicit bound on spectral gap,
the results of this paper are expected to be broadly useful to the practitioners who use the discrete-time reversible Markov chain
for Markov chain Monte-Carlo (MCMC) and  simulation purposes.    

% In particular, the extension is
% used to derive explicit bounds on the spectral gap which is an
% important problem in its own right with implications to MCMC and other
% simulation techniques.  

Analysis of geometric ergodicity based on Lyapunov drift condition
appears in~\cite{hairer2011yet}. Their main
result~\cite[Thm. 1.3]{hairer2011yet} is based on introducing a family
weighted normed spaces $L^{\beta V}_\infty$ and establishing the
spectral gap in this space, for a particular weight $\beta$. This is
different compared to this paper where a direct connection between
Lyapunov condition and Poincar\'e  inequality is established, and
explicit bounds on the $L^2$ spectral gap are derived. 

%However, the result in~\cite{hairer2011yet} is stronger as it is applicable to non-reversible Markov chains as well.

The outline of the remainder of this paper is as follows: The preliminaries and problem
statement appears in \Sec{sec:prelim}.  The main result for the
reversible Markov chain appears in \Sec{sec:main}.  Some
extensions to reversible and to non-reversible cases
are discussed in \Sec{sec:extension}. The main result is illustrated
with examples in \Sec{sec:example}. Some concluding remarks appear
in \Sec{sec:conclusion}.
\section{Preliminaries}\label{sec:prelim}
\subsection{Model and definitions}
\label{sec:setup}
Consider a time-homogeneous discrete-time Markov process
$\{X_n\}_{n\geq 0}$ taking values in Polish state space
$\calX$, equipped with the Borel $\sigma$-field $\mathcal{B}$.  Let $P$ denote the corresponding Markov operator defined such that \[P f(x) = \Expect[f(X_1)|X_0=x],\] for all bounded measurable functions $f:\calX \to \Re$. 
Let $p:\calX\times \mathcal{B} \rightarrow [0,1]$ be the probability
transition kernel associated with $P$. In terms of this kernel, the
action of $P$ on bounded measurable functions as follows:
\begin{equation*}
P f(\cdot ) = \int_{\calX} f(y)p(\cdot ,\ud y).
\end{equation*}
The action of $P$ on probability measure $\mu$ on $(\calX,\calB)$ is
as follows:
\begin{equation*}
\mu P (\cdot) = \int_{\calX} p(x ,\cdot)\ud \mu(x).
\end{equation*}
%The Markov operator $P$ is represented with a probability kernel $p(\cdot,\cdot)$ according to 
%where $p(\cdot,\cdot )$ is the probability kernel (that is for all $x\in \Re^d$, $p(x,\cdot)$ is a probability measure, and for all measurable sets $A \in \mathcal{B}(\Re^d)$, $p(\cdot,A)$ is measurable). 
%One can also 
A probability measure $\pi$  is  invariant for $P$ if $\pi P = \pi$.
%\[
%\pi 
%\]
%for all bounded measurable functions $f$.

Consider the space of square integrable functions with respect to $\pi$ denoted as $L^2(\pi)$ equipped with the inner product
\begin{equation*}
\lr{f}{g}_{\pi} := \int f(x)g(x) \ud \pi(x),
\end{equation*} 
%\begin{definition}
and the norm $\|f\|^2_{2,\pi} := \lr{f}{f}_{\pi}$. It follows from Jensen's inequality that $P$ is a bounded linear operator on $L^2(\pi)$, when $\pi$ is the invariant measure~\cite[pp. 10]{bakry2013}.  The
invariant measure $\pi$ is said to be reversible for the Markov
operator $P$ if 
$P$ is self-adjoint on $L^2(\pi)$, i.e., 
\begin{equation*}
\lr{f}{Pg}_{\pi}= \lr{Pf}{g}_{\pi},\quad \forall f,g \in L^2(\pi). 
\end{equation*}

In this paper, we consider Markov chains $P$ with a unique reversible
invariant measure $\pi$, formalized below as an assumption:

\medskip

\begin{assumption}\label{asmpt:reversible} $P$ admits a unique reversible invariant measure $\pi$.  
\end{assumption}

\medskip

The main question is to establish a spectral gap (in $L^2(\pi)$) for
$P$. Since $P$ has an eigenvalue $\lambda=1$ with
eigenfunction $f(x)\equiv1$, we consider the orthogonal subspace
$L^2_0(\pi)=\{f\in L^2(\pi);~\int f(x)\ud \pi(x)=0\}$. 
$P$ is said to admit a spectral gap $\beta>0$ in $L^2_0(\pi)$ if
\begin{equation}\label{eq:spectral-gap-def}
\|P\|_{L^2_0(\pi)}=\sup_{f \in L^2_0(\pi)}~ \frac{\|Pf\|_{2,\pi}}{\|f\|_{2,\pi}}\leq 1- \beta.
\end{equation}    

Two immediate consequences of the spectral gap are as follows:
\begin{enumerate}
\item Geometric convergence of the moments in $L^2(\pi)$
\begin{equation*}
\|P^n f - \pi(f)\|_{2,\pi}\leq (1-\beta)^n \|f - \pi(f)\|_{2,\pi},
\end{equation*}
 where $\pi(f) := \int f(x) \ud \pi(x)$ is the mean of $f$ with respect to the invariant measure $\pi$. 
\item Geometric convergence of the probability distribution in the
  total-variation distance~\cite[Thm. 2.1]{cattiaux2009trends},
\begin{equation*}
\|\mu P^n - \pi\|_{TV} \leq (1-\beta)^n \|h -1\|_{2,\pi},
\end{equation*}
for any initial distribution $\ud \mu = h \ud \pi$.   
\end{enumerate}

For reversible Markov chains, the spectral gap is related to the
Poincar\'e inequality as explained in the following section.

\subsection{Spectral gap and Poincar\'e inequality}
%The spectral gap is closely related to the Poincar\'e inequality. 
Define the Dirichlet forms 
\begin{align*}
\mathcal{E}(f,f) \eqdef \lr{f}{(I-P)f}_\pi,\quad \tilde{\mathcal{E}}(f,f) \eqdef \lr{f}{(I+P)f}_\pi. %= \frac{1}{2} \int_{\calX \times \calX} p(x,\ud y )(f(x)-f(y))^2 \ud \pi(x)
\end{align*}
Then $P$ is said to satisfy the Poincar\'e inequality, if there are positive constants $\beta_+$ and $\beta_-$ such that 
%Define the following two inequalities
\begin{align}
\|f\|^2_{2,\pi}\leq \frac{1}{\beta_+}\mathcal{E}(f,f)  ,\quad \forall f \in L^2_0(\pi), \label{eq:poincare_+}\\
\|f\|^2_{2,\pi}\leq \frac{1}{\beta_-}\tilde{\mathcal{E}}(f,f)  ,\quad \forall f \in L^2_0(\pi).\label{eq:poincare_-}
\end{align}

\begin{lemma}
	Under Assumption~\ref{asmpt:reversible},
	\begin{romannum}
		\item If $P$ satisfies the Poincar\'e inequality~\eqref{eq:poincare_+} with constant $\beta_+>0$, then the spectrum of $P$ on $L^2_0(\pi)$ is bounded above by $1-\beta_+$. 
		\item If $P$ satisfies the Poincar\'e inequality~\eqref{eq:poincare_-} with constant $\beta_->0$, then the spectrum of $P$ on $L^2_0(\pi)$ is bounded below by $-1+\beta_-$. 
		\item If $P$ satisfies the Poincar\'e inequalities~\eqref{eq:poincare_+}-\eqref{eq:poincare_-}, then  it admits spectral gap $\beta = \min(\beta_+,\beta_-)$.  
	\end{romannum} 
\end{lemma}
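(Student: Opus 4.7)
The plan is to exploit self-adjointness and reduce each claim to the standard fact that for a bounded self-adjoint operator $T$ on a Hilbert space $H$, the spectrum $\sigma(T)$ is contained in $[m,M]$, where $m \eqdef \inf_{\|f\|=1}\lr{f}{Tf}$ and $M \eqdef \sup_{\|f\|=1}\lr{f}{Tf}$, and moreover these endpoints lie in $\sigma(T)$ (so $\|T\|_{H}=\max(|m|,|M|)$).

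First I would observe that $L^2_0(\pi)$ is an invariant subspace for $P$: since $\pi$ is invariant, $\pi(Pf)=\pi(f)=0$ whenever $\pi(f)=0$. Combined with Assumption~\ref{asmpt:reversible}, this makes the restriction $P|_{L^2_0(\pi)}$ a bounded self-adjoint operator on the Hilbert space $L^2_0(\pi)$, so the spectral-theoretic facts above apply.

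For part (i), I rewrite inequality~\eqref{eq:poincare_+} as
\begin{equation*}
\lr{f}{Pf}_\pi \;=\; \|f\|_{2,\pi}^2 - \mathcal{E}(f,f) \;\leq\; (1-\beta_+)\|f\|_{2,\pi}^2, \qquad \forall f\in L^2_0(\pi),
\end{equation*}
so $M\leq 1-\beta_+$ and the spectrum of $P|_{L^2_0(\pi)}$ is bounded above by $1-\beta_+$. For part (ii), I rewrite~\eqref{eq:poincare_-} analogously as
\begin{equation*}
\lr{f}{Pf}_\pi \;=\; \tilde{\mathcal{E}}(f,f) - \|f\|_{2,\pi}^2 \;\geq\; (-1+\beta_-)\|f\|_{2,\pi}^2, \qquad \forall f\in L^2_0(\pi),
\end{equation*}
giving $m\geq -1+\beta_-$, which is exactly the lower bound on the spectrum.

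For part (iii), I combine both bounds to conclude $\sigma(P|_{L^2_0(\pi)})\subset[-1+\beta_-,\,1-\beta_+]$, hence
\begin{equation*}
\|P\|_{L^2_0(\pi)} \;=\; \max(|m|,|M|)\;\leq\; \max(1-\beta_+,\,1-\beta_-)\;=\;1-\min(\beta_+,\beta_-),
\end{equation*}
which by~\eqref{eq:spectral-gap-def} is precisely the spectral gap $\beta=\min(\beta_+,\beta_-)$. There is no real obstacle here; the only point requiring care is the invariance of $L^2_0(\pi)$ under $P$ (so that one may legitimately restrict and quote the self-adjoint spectral bound), and the standard identification of the operator norm of a self-adjoint operator with the larger of $|m|$ and $|M|$.
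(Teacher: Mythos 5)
Your proof is correct. The paper omits the proof of this lemma entirely, deferring to Stroock (Sec.~5.2.1); your argument --- restricting the self-adjoint operator $P$ to the invariant subspace $L^2_0(\pi)$ and reading off the spectral bounds from the extreme values of the quadratic form $\lr{f}{Pf}_\pi$, which the two Poincar\'e inequalities control from above and below --- is precisely the standard spectral-theoretic route the paper is pointing to, so nothing further is needed.
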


\proof{Omitted.  See~\cite[Sec. 5.2.1, pp. 115]{stroock2013introduction}}
%Then, one can conclude  that $P$ admits the spectral gap $C=\min(C_-,C_+)$. 
%% where 
%The relationship  follows 
%because $P$ is self-adjoint, and the norm of $P$ is equivalently given by
%\begin{equation*}
%\|P\|_{L^2_0(\pi)}=\sup_{f \in L^2_0(\pi)}~ \frac{|\lr{f}{Pf}|}{\lr{f}{f}}%\leq 1- C
%\end{equation*} 
%Then, define the following two inequalities
%\begin{align}\label{eq:poincare}
%\|f-\hat{f}\|^2_{L^2(\pi)}\leq \frac{1}{C}\lr{f}{(I-P)f} ,\quad \forall f \in L^2(\pi) 
%\end{align}

%
%\begin{equation}\label{eq:poincare}
%\|f-\hat{f}\|^2_{L^2(\pi)}\leq \frac{1}{C}\lr{f}{(I-P)f} ,\quad \forall f \in L^2(\pi) 
%\end{equation}
%for a positive constant $C>0$, where $\hat{f} = \int f \ud \pi$. 

\medskip
\begin{remark}\label{rem:spec}
	% In continuous-time setting, the Poincar\'e
        % inequality~\eqref{eq:poincare_+} is sufficient to establish
        % spectral gap.  
	For a continuous-time reversible Markov process with the infinitesimal
        generator $L$ and the semigroup $P_t=e^{tL}$, the Dirichlet form
        is defined as $\mathcal E(f,f) := -\lr{f}{Lf}_\pi$.
        Therefore, the
        Poincar\'e inequality~\eqref{eq:poincare_+} is expressed as
        $\lr{f}{Lf}_\pi\leq  -\beta_+ \|f\|_{2,\pi}^2$, from which the
        spectral gap for the semigroup,
        $\|P_t\|_{L^2(\pi)}=\|e^{tL}\|_{L^2(\pi)} \leq e^{-\beta_+ t}
        <1$, readily follows.  In discrete-time settings, the second
        Poincar\'e inequality~\eqref{eq:poincare_-} is also required.
        This is to rule out periodicity, eigenvalue at $-1$ for the
        reversible case~\cite[Ch. 5]{stroock2013introduction}.   
\end{remark}

\section{Main Result}\label{sec:main}

% \subsection{Poincar\'e inequality and Lyapunov conditions}
% Our analysis is based on the following assumption:
% \medskip

The main hypothesis of the paper is the Foster Lyapunov condition
(v4):
\begin{assumption}\label{asmpt:Lyapunov}
	Suppose $P$ satisfies
	\begin{align}
	PV &\leq (1-\lambda)V + b\mathds{1}_K,\label{eq:Lyapunov-condition}\\
	%p(x,A)
	P\mathds{1}_A(x) &\geq \alpha \nu(A) \mathds{1}_{K}(x),\quad \forall A \in \mathcal{B},\label{eq:assumption2}
	\end{align}
	for a positive function $V:\Re^d \to [1,\infty)$, numbers $b<\infty$, $\alpha,\lambda>0$, a set $K \subseteq \calX$, and a probability measure $\nu$. 
\end{assumption} 
\medskip
The condition~\eqref{eq:Lyapunov-condition} is known as the drift
condition and condition~\eqref{eq:assumption2} is known as the
minorization condition.  
The main result of this paper is as follows:
%Next, we show that the Poincar\'e inequality~\eqref{eq:poincare_+} follows from Assumption~\ref{asmpt:Lyapunov}. 
%\end{definition}
%\medskip
%\noindent
%{\bf Assumption:} The Markov operator $P$ admits an invariant measure $\pi$ which is also reversible 
\medskip
%  $f \in C_b(\Re^d)$.  
%\section{Lyapunov condition implies the spectral gap}
%Now, we are ready to state the mai
\begin{theorem}\label{thm:Lyapunov-Poincare}
%	Let $P$ be a Markov operator that satisfies 
Under Assumptions~\ref{asmpt:reversible}-\ref{asmpt:Lyapunov},
%	\begin{align}
%	PV &\leq (1-\lambda)V + b\mathds{1}_K\label{eq:Lyapunov-condition}\\
%	p(x,A) &\geq \alpha \nu(A) \mathds{1}_{x\in K},\quad \forall A \in \mathcal{B}(\Re^d)\label{eq:assumption2}
%	\end{align}
%	for a positive function $V:\Re^d \to [1,\infty)$, numbers
%	$b<\infty$, $\alpha,\lambda>0$, compact set $K \subset \Re^d$,
%	and a probability measure $\nu$. 
$P$ admits a Poincar\'e inequality~\eqref{eq:poincare_+} with constant
%	\begin{equation}\label{eq:spectral-gap}
%	\|P\|_{L^2_0(\pi)} \leq 1-C
%	\end{equation}
%	where 
	$\beta_+=\frac{\lambda}{1+\frac{2b}{\alpha}}$. % If $P$ is
        % further assumed to be a positive semi-definite operator (on
        % $L^2(\pi)$ then $P$ also admits spectral gap $\beta = \beta_+
        % =\frac{\lambda}{1+\frac{2b}{\alpha}}$. 
\end{theorem}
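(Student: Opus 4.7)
The plan is to start from the drift condition rewritten as $\lambda V \le (I-P)V + b\mathds{1}_K$, then for a bounded $g \in L^2(\pi)$ multiply pointwise by $g^2/V$ (legitimate since $V \ge 1$) and integrate against $\pi$ to obtain the master inequality
\[
 \lambda\|g\|^2_{2,\pi} \;\le\; \int \frac{g^2}{V}(I-P)V\,d\pi \;+\; b\int \frac{g^2}{V}\mathds{1}_K\,d\pi.
\]
It then suffices to bound each right-hand term by a constant multiple of $\mathcal{E}(g,g)$.

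For the first term, reversibility gives the polarized Dirichlet-form representation $\langle \phi,(I-P)\psi\rangle_\pi = \tfrac12\!\iint(\phi(x)-\phi(y))(\psi(x)-\psi(y))\pi(dx)p(x,dy)$, which with $\phi = g^2/V$ and $\psi = V$ reduces the integrand to $g^2(x) + g^2(y) - g^2(x)\tfrac{V(y)}{V(x)} - g^2(y)\tfrac{V(x)}{V(y)}$. The elementary AM-GM inequality $g^2(x)\tfrac{V(y)}{V(x)} + g^2(y)\tfrac{V(x)}{V(y)} \ge 2|g(x)g(y)|$ then shows this integrand is at most $(g(x) - g(y))^2$, so $\int (g^2/V)(I-P)V\,d\pi \le \mathcal{E}(g,g)$.

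For the second term I would first use $V \ge 1$ to reduce to $\int_K g^2\,d\pi$, and then exploit the minorization \eqref{eq:assumption2}, which is equivalent to the kernel inequality $p(x,\cdot) \ge \alpha \nu$ for $x \in K$. The key (and what I expect to be the main point of cleverness) is the choice of centering: since $\mathcal{E}$ is invariant under addition of constants to $g$, I would pick the shift making $\nu(g)=0$, rather than the more natural $\pi(g)=0$. Applying $p(x,dy) \ge \alpha \nu(dy)$ to the nonnegative integrand $(g(y)-g(x))^2$ and integrating over $x \in K$ against $\pi$ yields
\[
 2\mathcal{E}(g,g) \;\ge\; \alpha\!\int_K [\nu(g^2) - 2g(x)\nu(g) + g^2(x)]\,\pi(dx);
\]
the cross term vanishes by the choice $\nu(g)=0$, the nonnegative remainder $\alpha\pi(K)\nu(g^2)$ is discarded, and what remains is $\int_K g^2\,d\pi \le (2/\alpha)\mathcal{E}(g,g)$.

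Combining the two estimates yields $\lambda\|g\|^2_{2,\pi} \le (1 + 2b/\alpha)\mathcal{E}(g,g)$ for every $g$ with $\nu(g)=0$. To conclude, given any $f \in L^2_0(\pi)$ I would apply this to $g = f - \nu(f)$: the Dirichlet form is unchanged, and since $\pi(f)=0$ one computes $\|g\|^2_{2,\pi} = \|f\|^2_{2,\pi} + \nu(f)^2 \ge \|f\|^2_{2,\pi}$, which produces \eqref{eq:poincare_+} with the advertised constant $\beta_+ = \lambda/(1 + 2b/\alpha)$. The nonstandard shift by $\nu(f)$ is what makes the minorization bound on $\int_K g^2\,d\pi$ clean; the $V \ge 1$ relaxation in the second term is a minor loss, justified by the fact that the full power of $V$ has already been extracted in the first term via AM-GM.
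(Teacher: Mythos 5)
Your proposal is correct and follows essentially the same route as the paper: the same master inequality obtained by multiplying the drift condition by $g^2/V$ and integrating, the same reversibility-based bound $\langle g^2/V,(I-P)V\rangle_\pi\le\mathcal{E}(g,g)$ (your polarization-plus-AM-GM step is exactly the paper's expansion of $\iint V(x)V(y)\bigl(\tfrac{g(x)}{V(x)}-\tfrac{g(y)}{V(y)}\bigr)^2\,p(x,\ud y)\ud\pi(x)\ge 0$), and the same use of minorization to control $\int_K g^2\,\ud\pi$. The only cosmetic difference is the centering constant: you take $m=\nu(f)$ directly, whereas the paper centers at $\frac{1}{\pi(K)}\int_K f\,\ud\pi$ and reaches the same bound via Jensen plus the variational characterization of the variance.
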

\medskip

For continuous-time processes, the analogous result appears
in~\cite[Thm. 4.6.2 pp. 202]{bakry2013}.  
% Theorem~\ref{thm:Lyapunov-Poincare} extends the result on establishing Poincar\'e inequality with Lyapunov function method for continuous-time processes  to discrete-time setting. However, unlike 
Unlike the continuous-time case, the conclusion of the
Theorem~\ref{thm:Lyapunov-Poincare} is not sufficient to establish a
spectral gap without also establishing~\eqref{eq:poincare_-}, except
for the case when $P$ is positive semi-definite.  
%One can have~\eqref{eq:poincare_-} for free when the Markov operator
%is positive semi-definite.  

\medskip
\begin{corollary}\label{cor:positive}
	Under the assumptions of Theorem~\ref{thm:Lyapunov-Poincare},
        if $P$ is a positive semi-definite operator, then $P$ admits a
        spectral gap $\beta = \beta_+ =\frac{\lambda}{1+\frac{2b}{\alpha}}$. 
\end{corollary}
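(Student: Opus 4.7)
The plan is to combine Theorem~\ref{thm:Lyapunov-Poincare} with the positive semi-definiteness hypothesis to sidestep the second Poincar\'e inequality \eqref{eq:poincare_-}. Recall from Remark~\ref{rem:spec} that the only purpose of \eqref{eq:poincare_-} is to rule out an eigenvalue near $-1$; when $P$ is positive semi-definite this is automatic, so the spectral gap reduces to the one-sided bound already produced by Theorem~\ref{thm:Lyapunov-Poincare}.

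Concretely, I would proceed in three short steps. First, invoke Theorem~\ref{thm:Lyapunov-Poincare} to obtain the Poincar\'e inequality \eqref{eq:poincare_+} with the explicit constant $\beta_+=\lambda/(1+2b/\alpha)$. Second, apply part (i) of the Lemma to conclude that the spectrum of $P$ restricted to $L^2_0(\pi)$ is contained in $(-\infty, 1-\beta_+]$. Third, use positive semi-definiteness together with self-adjointness (Assumption~\ref{asmpt:reversible}) and the contraction property $\|P\|_{L^2(\pi)}\leq 1$ (Jensen's inequality) to localize the spectrum of $P$ on $L^2_0(\pi)$ inside $[0, 1-\beta_+]$. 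Since $P$ is self-adjoint on $L^2_0(\pi)$, its operator norm equals the spectral radius, hence $\|P\|_{L^2_0(\pi)}\leq 1-\beta_+$, which is exactly the defining inequality \eqref{eq:spectral-gap-def} of a spectral gap with $\beta=\beta_+$.

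An equivalent route, which may read more cleanly inside the paper's framework, is to verify \eqref{eq:poincare_-} directly with constant $\beta_-=1$: positive semi-definiteness gives $\lr{f}{Pf}_\pi\geq 0$, whence
\begin{equation*}
\tilde{\mathcal E}(f,f)=\lr{f}{(I+P)f}_\pi=\|f\|_{2,\pi}^2+\lr{f}{Pf}_\pi\geq \|f\|_{2,\pi}^2.
\end{equation*}
Then part (iii) of the Lemma yields a spectral gap $\beta=\min(\beta_+,1)=\beta_+$, where the last equality uses $\beta_+\leq 1$ (otherwise $\sigma(P|_{L^2_0(\pi)})\subseteq (-\infty,1-\beta_+]$ would contradict $\sigma(P)\subseteq [0,1]$).

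There is no real obstacle here; the corollary is essentially a bookkeeping consequence of Theorem~\ref{thm:Lyapunov-Poincare} and the Lemma. The one subtlety worth flagging explicitly is the implicit bound $\beta_+\leq 1$, which must hold under positive semi-definiteness and should be stated so that the reader sees that $\min(\beta_+,1)=\beta_+$ is not an additional hypothesis but a consequence of the setting.
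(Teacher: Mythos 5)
Your proposal is correct and matches the argument the paper intends (the corollary is stated without proof, but the surrounding Lemma and Remark~\ref{rem:spec} make clear that the intended reasoning is exactly your second route: positive semi-definiteness gives $\lr{f}{Pf}_\pi\geq 0$, hence \eqref{eq:poincare_-} holds with $\beta_-=1$, and part (iii) of the Lemma yields the gap $\min(\beta_+,1)=\beta_+$). Your side remark that $\beta_+\leq 1$ must hold in any nondegenerate setting is a reasonable piece of bookkeeping that the paper leaves implicit.
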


\medskip
Later, in \Sec{sec:P2}, additional assumption is introduced to
establish spectral gap for Markov operators that are not necessarily
positive semi-definite.

%\begin{remark}
%Theorem~\ref{thm:Lyapunov-Poincare} extends the result about establishing Poincar\'e inequality with Lyapunov function method~\cite[Thm. 4.6.2 pp. 202]{bakry2013}  to discrete-time setting. 
%\end{remark}
%\subsection{Poincar\'e inequality and spectral gap}
%Now that we established the Poincar\'e  inequality, it remains to show that the Poincar\'e inequality implies the spectral gap for $P$. However, this is not possible as we discuss below. 

%Because $P$ is self-adjoint, the norm of $P$ is given by
%\begin{equation*}
%\|P\|_{L^2_0(\pi)}=\sup_{f \in L^2_0(\pi)}~ \frac{|\lr{f}{Pf}|}{\lr{f}{f}}%\leq 1- C
%\end{equation*} 
%The Poincar\'e inequality with constant $C$ implies that 
%\begin{equation*}
%\sup_{f \in L^2_0(\pi)}~ \frac{\lr{f}{Pf}}{\lr{f}{f}}\leq 1- C
%\end{equation*}    
% i.e the spectrum of $P$ is bounded from above by $1-C$. In order to show the spectral gap, we also need to show that the spectrum is bounded from below by $-1+C$.  

%\medskip
 
\subsection{Proof of Theorem~\ref{thm:Lyapunov-Poincare}}

\begin{remark}
If $K= {\cal X}$ then the minorization condition~\eqref{eq:assumption2}
is the Doeblin's condition which directly implies the spectral gap
$\|P\|_{L^2_0(\pi)} \leq
1-\frac{\alpha}{2}$~\cite[Sec. 2.2, pp. 28]{stroock2013introduction}.  
However, Doeblin's condition is a very strong assumption for 
Markov-chains. 
% and condition~\eqref{eq:assumption2} holds on a compact set. 
%	
%However, in general, the condition~\eqref{eq:assumption2} is expected to hold in a compact set. 
In the other extreme when $K = \emptyset$ then the drift
condition~\eqref{eq:Lyapunov-condition} implies the spectral gap
$\|P\|_{L^2_0(\pi)} \leq 1-\lambda$ from the spectral theory of
positive operators~\cite[Thm. 13.1.6, pp. 383]{davies2007linear}.
Owing to the eigenvalue at $1$, this case does
not apply to Markov operators.  However, by suitably adapting the
proof from the theory of positive operators to accomodate the
minorization condition~\eqref{eq:assumption2}, one obtains an
elementary proof of Theorem~\ref{thm:Lyapunov-Poincare} as presented
next.  
\end{remark}

%\end{remark}
\medskip
\begin{proof}[Proof of the Theorem~\ref{thm:Lyapunov-Poincare}]
%	The spectral gap~\eqref{eq:spectral-gap} is equivalent to the Poincar\'e inequality~\cite[Sec. 5.2.1]{stroock2013introduction}
%	\begin{equation*}
%	\|f-\hat{f}\|^2_{L^2(\pi)}\leq \frac{1}{C}\lr{f}{(I-P)f} ,\quad \forall f \in L^2(\pi) 
%	\end{equation*}
%	where $\hat{f} = \int f \ud \pi$. 
	From the variational characterization of the mean, we have
        $\|f-\pi(f)\|_{2,\pi}\leq \| f - m \|_{2,\pi}$ for all
        constants $m\in\Re$. Therefore, in order to prove the Poincar\'e inequality~\eqref{eq:poincare_+},  it suffices to show that  
	\begin{equation}\label{eq:temp-PE}
	\|f-m\|^2_{2,\pi}\leq \frac{1}{C}\lr{f}{(I-P)f}_\pi ,\quad \forall f \in L^2(\pi), 
	\end{equation}
	for some constant $m=m(f)$ to be chosen later. 

Consider the Lyapunov drift condition~\eqref{eq:Lyapunov-condition}. Multiply both sides by $\frac{(f-m)^2}{V}$ to obtain
	\begin{align*}
	\frac{(f-m)^2}{V}PV &\leq (1-\lambda) (f-m)^2 + b\frac{1}{V}(f-m)^2 \mathds{1}_K \\&\leq (1-\lambda) (f-m)^2 + b(f-m)^2 \mathds{1}_K,
	\end{align*}  
	where the second inequality follows because $V\geq 1$. Rearranging the terms
	\begin{equation*}
	\lambda (f-m)^2  \leq \frac{(f-m)^2}{V}(I-P)V + b(f-m)^2 \mathds{1}_K, 
	\end{equation*}
	and integrating both sides with respect to $\pi$,
	\begin{equation*}
	\lambda \|f-m\|^2_{2,\pi} \leq \lr{\frac{(f-m)^2}{V}}{(I-P)V}_\pi  + b\|(f-m)\mathds{1}_K\|^2_{2,\pi}. 
	\end{equation*}

It is claimed that
	\begin{align}
	\lr{\frac{(f-m)^2}{V}}{(I-P)V}_\pi &\leq \lr{f}{(I-P)f}_\pi,\label{eq:claim1}\\
	\|(f-m)\mathds{1}_K\|_{2,\pi}^2 &\leq \frac{2}{\alpha} \lr{f}{(I-P)f}_\pi,\label{eq:claim2} 
	\end{align}
	with $m=\frac{1}{\pi(K)}\int_{K} f \ud \pi$. If the claims are true then
	\begin{equation*}
	\|f-m\|^2_{2,\pi}\leq \frac{1+\frac{2b}{\alpha}}{\lambda} \lr{f}{(I-P)f}_\pi,
	\end{equation*}
	which proves~\eqref{eq:temp-PE}, hence the Poincar\'e
        inequality~\eqref{eq:poincare_+} with $\beta_+ =\frac{\lambda}{1+\frac{2b}{\alpha}}$.  It remains to prove
        the two claims:
%	 and the spectral gap $C=\frac{\lambda}{1+\frac{2b}{\alpha}}$. It remains to prove the claims. 
\begin{enumerate}
\item Proof of the claim~\eqref{eq:claim1}: Let $g=f-m$. Then, using $(I-P)1=0$,~\eqref{eq:claim1} is equivalently expressed as 
	\begin{equation}
	\lr{g}{Pg}_\pi \leq \lr{\frac{g^2}{V}}{P V}_\pi. \label{eq:claim11}
	\end{equation}
%	The equivalence follows because the RHS of~\eqref{eq:claim1} is
%	\begin{align*}
%	\lr{f}{(I-P)f} &= 	\lr{g+m}{(I-P)(g+m)} \\
%	&=\lr{g+m}{(I-P)g}\\
%	&=  \lr{g}{(I-P)g} + m\lr{1}{(I-P)g}\\
%	&=  \lr{g}{(I-P)g} + m\lr{(I-P)1}{g}\\
%	&=\lr{g}{(I-P)g} %+ m\lr{g}{(I-P)1}
%	\end{align*} 
%	where we used $(I-P)1 = 0$ and symmetry of $P$. And, the LHS of~\eqref{eq:claim1} is
%	\begin{align*}
%	\lr{\frac{(f-m)^2}{V}}{(I-P)V}  &= \lr{\frac{g^2}{V}}{(I-P)V} \\&=  \lr{\frac{g^2}{V}}{V} -  \lr{\frac{g^2}{V}}{PV}  \\
%	&=\lr{g}{g} -  \lr{\frac{g^2}{V}}{PV} 
%	\end{align*}
	Note that
	\begin{align*}
	0&\leq \int\int  V(x)V(y)\left(\frac{g(y)}{V(y)}-\frac{g(x)}{V(x)}\right)^2 p(x,\ud y)  \ud \pi(x)\\&= \int \int  \frac{g(x)^2}{V(x)}V(y)p(x,\ud y)  \ud \pi(x) \\&\quad\quad+ \int \int \ \frac{g(y)^2}{V(y)}V(x)p(x,\ud y)  \ud \pi(x)\\
	&\quad \quad -2\int \int g(x)g(y)p(x,\ud y)  \ud \pi(x)\\
	&= \lr{\frac{g^2}{V}}{PV}_\pi + \lr{V}{P\frac{g^2}{V}}_\pi -2\lr{g}{Pg}_\pi. 
	\end{align*}
	Using the self-adjoint property of $P$, it follows that 
$
\lr{\frac{g^2}{V}}{PV} = \lr{V}{P\frac{g^2}{V}}
$ which in turn proves~\eqref{eq:claim11}. 
% and as a result 
  %       \begin{equation*}
  %       0 \leq 2 \lr{\frac{g^2}{V}}{PV}  -2\lr{g}{Pg} 
  %       \end{equation*} 
  %       proving the claim.
	
\medskip

\item Proof of the claim~\eqref{eq:claim2}: Note that 
	\begin{align*}
	\lr{f}{(I-P)f}_\pi&= \frac{1}{2} \int\! \int (f(x)-f(y))^2p(x,\ud y)  \ud \pi(x)\\
	&\overset{(1)}{\geq} \frac{\alpha }{2} \int_{K} \!\int  (f(x)-f(y))^2 \ud \nu(y)\ud \pi(x) \\
	&\overset{(2)}{\geq} \frac{\alpha}{2} \int_K \!(f(x)-\int f(y) \ud \nu(y) )^2 \ud \pi(x) \\
	&\overset{(3)}{\geq} \frac{\alpha}{2}\int_K (f(x)-m)^2 \ud \pi(x). 
	\end{align*}
	The first inequality follows from the use of the 
        minorization condition~\eqref{eq:assumption2}. The second
        inequality is 
        % follows because $\int (f(y)-c)^2 \ud \nu(y) \geq (\int f(y)\ud
        % \nu(y) -c)^2$
        the Jensen's inequality. The third
        inequality 
        follows from the variational characterization of the variance
        of the function $f$ 
        because $m= \frac{1}{\pi(K)}\int_K f \ud \pi $ is the
        mean. 
%Hence the claim is proved.  
\end{enumerate}
	%therefore the spectral gap holds with $\lambda=\delta$.
	%\begin{equation*}
	%\int p(x,y)(f(x)-f(y))
	%\end{equation*}
\end{proof}

\subsection{A counter-example}

The following counter-example serves to show that it is not
possible to obtain a bound for $\beta_-$ using {\em only} the Foster
Lyapunov condition (v4).
% In the following example, we show that the spectral gap of $P$ can be
% made arbitrary small, while the constants in the
% Assumption~\ref{asmpt:Lyapunov} do not change. 

 \medskip
 \begin{example}\label{counter-example}
 	Consider 
% a Markov-chain taking two values in the finite state-space $\mathbb{S} = \{1,2\}$. 
 	% Suppose
        the Markov transition matrix \[P = \begin{bmatrix}
 	\epsilon & 1-\epsilon \\
 	1- \epsilon & \epsilon
 	\end{bmatrix},\] on the state-space $\mathbb{S} = \{1,2\}$.
      The invariant measure $\pi = [
      \frac{1}{2},~\frac{1}{2}]$ is reversible. The eigenvalues of $P$ are
      $\lambda = 1, -1 + 2\epsilon$. Therefore, $\beta_+=1-2\epsilon$
      and $\beta_-=2\epsilon$.  In the following, we show that the
      conditions~\eqref{eq:Lyapunov-condition}-\eqref{eq:assumption2}
      hold, with constants that are independent of $\epsilon$.  As a
      result, it is not possible to derive a bound on $\beta_-$ simply
      from the constants that appear in the
      conditions~\eqref{eq:Lyapunov-condition}-\eqref{eq:assumption2}.   
% 	 yielding the Poincar\'e constant $\beta_+ = \frac{1}{14}$, for all $\epsilon\leq \frac{1}{4}$:
 \begin{enumerate}	
\item 	Let the subset $K = \{1\} \subset \mathbb{S}$. 
 	Then the condition~\eqref{eq:assumption2} holds with $\alpha =
        1$ and $\nu = [\nu_1,~\nu_2]=[\epsilon,~1-\epsilon]$ because 
 	\begin{align*}
 	P_{11}  = \epsilon = \alpha \nu_1 ,\quad  P_{12} =1-\epsilon= \alpha \nu_2. 
 	\end{align*}
 \item For all $\epsilon \leq
        \frac{1}{4}$, the condition~\eqref{eq:Lyapunov-condition} holds with $V = [1,~3]^\top$, $\lambda = \frac{1}{2}$, and $b=3$ because 
 	\begin{align*}
 	&(i=1)~~ P_{11}V_1 + P_{12}V_2 = \epsilon + 3(1-\epsilon) \\&\quad \quad \quad \quad \quad \leq \frac{1}{2} + 3 = (1-\lambda)V_1+b,\\
 	&(i=2)~~ P_{21}V_1 + P_{22}V_2 = 1-\epsilon + 3\epsilon \leq \frac{3}{2}=  (1-\lambda)V_2,
 	\end{align*}
%	where the second line holds when 
The resulting bound for $\beta_+$ is
        $\frac{\frac{1}{2}}{1+\frac{2*3}{1}}= \frac{1}{14}$.
        % [AMIR:
        %Fix top line].
\end{enumerate}
 \end{example}

\section{Extensions}\label{sec:extension}

By imposing additional conditions, the analysis of this paper
is useful to obtain bounds for the spectral gap in the
reversible and also the non-reversible cases.  Two sets of results are
described next.  
% The proofs appear in the final subsection after the
% results have been presented.  

\subsection{Spectral gap under stronger condition}\label{sec:P2}
From Corollary~\ref{cor:positive}, a spectral gap is obtained whenever
$P$ is positive semi-definite.  Therefore, one way to prove the
spectral gap for $P$ is to consider $P^2$ which is always positive
semi-definite for reversible Markov processes.  Given the
counter-example~\ref{counter-example}, it is not true that $P^2$
satisfies condition (v4) (if $P$ does), without imposing some additional
condition.  One such condition, based on~\cite[Assumption 2]{hairer2011yet}, is
as follows:  

% To apply the result of Theorem~\ref{}, one needs to show that $P^2$
% satisfies the
% conditions~\eqref{eq:Lyapunov-condition}-\eqref{eq:assumption2} with
% a constant $\tilde{\beta}$.  In that case, $\|P^2\|_{L^2_0(\pi)}\leq
% 1-\tilde{\beta}$, which in turn implies $\|P\|_{L^2_0(\pi)}\leq
% (1-\tilde{\beta})^{1/2}$. 

% To prove such a result for $P^2$, we consider a stronger version of the Assumption~\ref{asmpt:Lyapunov} borrowed from~\cite{hairer2011yet}.  

\medskip
\begin{assumption}\label{asmpt:level-set}
	 In condition (v4) (in Assumption~\ref{asmpt:Lyapunov}), the set 
\[K=\{x \in
         \calX;~V(x)\leq  R\},
\] 
for some $R>\frac{2b}{\lambda}$. %In words, $K$ is a level-set of $V$.  
\end{assumption}
\medskip
%We add an additional condition that

%\begin{align}
%PV &\leq (1-\lambda)V + b\label{eq:Lyapunov-condition-2}\\
%p(x,A) &\geq \alpha \nu(A) \mathds{1}_{x\in K},\quad \forall A \in \mathcal{B}(\Re^d)\label{eq:minorization-2}
%\end{align}
%for a positive function $V:\Re^d \to [1,\infty)$, numbers $b<\infty$, $\alpha,\lambda \in (0,1)$,  a probability measure $\nu$, and $K=\{x \in \Re^d;~V(x)\leq  R\}$ for some $R>\frac{2b}{\lambda}$. 
\begin{proposition}\label{lem:P-squared}
Suppose $P$ is a Markov operator that satisfies
Assumptions~\ref{asmpt:reversible}, \ref{asmpt:Lyapunov},
and~\ref{asmpt:level-set}. Then $P^2$ satisfies 
\begin{align}
P^2V &\leq (1-\lambda')V + b' \mathds{1}_K, \label{eq:Lyapunov-condition-2}\\
P^2 \mathds{1}_A(x) &\geq \alpha' \nu(A) \mathds{1}_{K}(x),\quad \forall A \in \mathcal{B}\label{eq:minorization-2}
\end{align}
where $\lambda'=\lambda(\frac{3}{2}-\lambda)$, $b'=(2-\lambda) b$, and
$\alpha' = \alpha^2\nu(K)$.  Consequently,
\begin{equation}\label{eq:spec-gap-result}
\|P\|_{L^2_0(\pi)}\leq \left(1-\beta^+\right)^{\frac{1}{2}}.
\end{equation}
with
$\beta^+=\frac{\lambda(\frac{3}{2}-\lambda)}{1+\frac{2b(2-\lambda)}{\alpha^2\nu(K)}}$.  
\end{proposition}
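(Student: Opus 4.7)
The plan is to verify the two conditions on $P^2$ by iterating the drift and minorization conditions for $P$, and then apply Corollary~\ref{cor:positive} to $P^2$, which is automatically positive semi-definite when $P$ is self-adjoint.

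For the drift condition~\eqref{eq:Lyapunov-condition-2}, I would apply $P$ to both sides of~\eqref{eq:Lyapunov-condition} (using positivity of $P$) and then invoke the drift condition once more on the resulting $PV$ term:
\[
P^2V \;\leq\; (1-\lambda)PV + b\,P\mathds{1}_K \;\leq\; (1-\lambda)^2 V + (1-\lambda)b\,\mathds{1}_K + b\,P\mathds{1}_K.
\]
Bounding $P\mathds{1}_K \leq 1$ and then splitting into the cases $x\in K$ and $x\in K^c$ gives the desired form. The key maneuver on $K^c$ uses Assumption~\ref{asmpt:level-set}: there $V(x) > R > 2b/\lambda$, so $b \leq (\lambda/2)V(x)$, and the surplus constant can be absorbed into the leading $V$ term to upgrade $(1-\lambda)^2$ into $1-\tfrac{3\lambda}{2}+\lambda^2 = 1-\lambda'$. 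On $K$, the constants $(1-\lambda)b + b$ collect into $b'=(2-\lambda)b$, and the coefficient $(1-\lambda)^2 \leq 1-\lambda'$ is already tight enough in front of $V$.

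For the minorization~\eqref{eq:minorization-2}, I would use the tower property $P^2\mathds{1}_A(x) = \int P\mathds{1}_A(y)\,p(x,\ud y)$, apply~\eqref{eq:assumption2} to the inner term to get $P^2\mathds{1}_A(x) \geq \alpha\nu(A)\,P\mathds{1}_K(x)$, and then apply~\eqref{eq:assumption2} a second time with $A=K$ to bound $P\mathds{1}_K(x) \geq \alpha\nu(K)\mathds{1}_K(x)$. This yields $\alpha' = \alpha^2\nu(K)$.

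For the spectral gap bound~\eqref{eq:spec-gap-result}, I would note that $P^2$ is self-adjoint and positive semi-definite in $L^2(\pi)$, since $\lr{f}{P^2 f}_\pi = \|Pf\|_{2,\pi}^2 \geq 0$. Together with the two conditions just established, Corollary~\ref{cor:positive} applied to $P^2$ (with constants $\lambda',b',\alpha'$) gives $\|P^2\|_{L^2_0(\pi)} \leq 1-\beta^+$ with the stated $\beta^+$. Since $P$ is self-adjoint, the operator norm equals the spectral radius, so $\|P\|_{L^2_0(\pi)}^2 = \|P^2\|_{L^2_0(\pi)}$, yielding~\eqref{eq:spec-gap-result}.

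The main obstacle is the algebraic case-split in the drift step: the coefficients $\lambda' = \lambda(\tfrac{3}{2}-\lambda)$ and $b'=(2-\lambda)b$ are precisely what is needed to simultaneously (i) absorb the leftover $b$ arising from $bP\mathds{1}_K \leq b$ on $K^c$, using the level-set hypothesis $R>2b/\lambda$, and (ii) keep the inequality tight on $K$. Without Assumption~\ref{asmpt:level-set}, the constant $b$ cannot be recast as a multiple of $V$, and the iterated drift fails to fit the template $(1-\lambda')V + b'\mathds{1}_K$.
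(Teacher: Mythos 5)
Your proposal is correct and follows essentially the same route as the paper: iterate the drift condition, bound $P\mathds{1}_K\leq \mathds{1}$, and use the level-set hypothesis $R>2b/\lambda$ to absorb the leftover $b$ into $\tfrac{\lambda}{2}V$ off $K$ (the paper packages your case split into the single inequality $\mathds{1}\leq \mathds{1}_K+V/R$), then compose the minorization with $A=K$ and apply Corollary~\ref{cor:positive} to the positive semi-definite operator $P^2$. Your explicit final step $\|P\|_{L^2_0(\pi)}^2=\|P^2\|_{L^2_0(\pi)}$, valid by self-adjointness, is left implicit in the paper but is exactly what is needed.
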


\begin{proof}
	Proof of~\eqref{eq:Lyapunov-condition-2}: Because $P$ is a
        positive operator\footnote{An operator $P$ is positive if
          $Pf\geq 0$ whenever $f\geq 0$.}, $Pf\leq Pg$ whenever $f\leq
        g$. Therefore, applying $P$ to both sides of the inequality~\eqref{eq:Lyapunov-condition}, 
	\begin{align*}
	P^2V &\leq (1-\lambda) PV + b P\mathds{1}_K \\
	&\leq (1-\lambda)^2 V + (1-\lambda) b \mathds{1}_K + b P\mathds{1}_K \\
	&\overset{(3)}{\leq}  (1-\lambda)^2 V + (1-\lambda) b \mathds{1}_K+ b(\mathds{1}_K + \frac{V}{R}) \\
	&\overset{(4)}{\leq}  ((1-\lambda)^2+\frac{\lambda}{2}) V + (2-\lambda) b \mathds{1}_K,
	\end{align*}
	where the third inequality follows from $P\mathds{1}_K \leq P
        \mathds{1} =\mathds{1}\leq  \mathds{1}_K + \frac{V}{R}$ and
        the fourth inequality is because $R>\frac{b}{2\lambda}$. This
        completes the proof of~\eqref{eq:Lyapunov-condition-2}. 
	
\medskip

	Proof of~\eqref{eq:minorization-2}: Letting $A=K$ in the
        minorization condition~\eqref{eq:assumption2},
	\begin{equation*}
	P\mathds{1}_K(x) \geq \alpha \nu(K) \mathds{1}_K(x).
	\end{equation*}
	As a result, applying $P$ to both sides of~\eqref{eq:assumption2},
	\begin{align*}
	P^2 \mathds{1}_A(x) \geq \alpha \nu(A) P \mathds{1}_K(x) \geq \alpha^2\nu(K) \nu(A) \mathds{1}_K(x),
	\end{align*}
	which proves~\eqref{eq:minorization-2}. 
	
Spectral gap~\eqref{eq:spec-gap-result}
follows from application of Corollary~\ref{cor:positive} to positive semi-definite operator $P^2$ satisfying conditions~\eqref{eq:Lyapunov-condition-2}-\eqref{eq:minorization-2}.
\end{proof}

\subsection{Spectral gap for a non-reversible chain}\label{sec:NR}

Suppose $P$ is a  Markov operator with a unique invariant measure $\pi$ and
suppose $P^\dagger$ is its adjoint in $L^2(\pi)$.  Then $P^\dagger
P$ is a Markov operator with a reversible invariant measure
$\pi$.
%Amir what about uniqueness?
  
\begin{proposition}\label{prop:non-reversible}
Suppose both $P$ and its adjoint $P^\dagger$ satisfy
condition (v4) (inequality~\eqref{eq:Lyapunov-condition}-\eqref{eq:assumption2}) with the same Foster Lyapunov function $V$, set $K$ and
constants $\lambda$, $b$ and $\alpha$.  Then $P^{\dagger}P$ satisfies 
\begin{align}
P^{\dagger} P V &\leq (1-\lambda')V + b' \mathds{1}_K, \label{eq:Lyapunov-condition-3}\\
P^{\dagger} P\mathds{1}_A(x) &\geq \alpha' \nu(A) \mathds{1}_{K}(x),\quad \forall A \in \mathcal{B}\label{eq:minorization-3}
\end{align}
where $\lambda'=\lambda(\frac{3}{2}-\lambda)$, $b'=(2-\lambda) b$, and
$\alpha' = \alpha^2\nu(K)$.  Consequently,
\begin{equation}\label{eq:spec-gap-result-3}
\|P\|_{L^2_0(\pi)}\leq \left(1-\beta^+\right)^{\frac{1}{2}}.
\end{equation}
with
$\beta^+=\frac{\lambda(\frac{3}{2}-\lambda)}{1+\frac{2b(2-\lambda)}{\alpha^2\nu(K)}}$.  
\end{proposition}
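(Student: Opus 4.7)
The plan is to imitate the proof of Proposition~\ref{lem:P-squared} with $P^\dagger P$ playing the role of $P^2$. Two structural facts make this substitution work: since $\pi$ is invariant for $P$, the adjoint $P^\dagger$ on $L^2(\pi)$ is itself a Markov operator (its kernel is the time-reversal $\pi(\ud y)\, p^\dagger(y,\ud x) = \pi(\ud x)\, p(x,\ud y)$, so $P^\dagger$ is positive, monotone, and satisfies $P^\dagger \mathds{1}=\mathds{1}$); and $P^\dagger P$ is self-adjoint and positive semi-definite on $L^2(\pi)$, so $\pi$ is automatically reversible for $P^\dagger P$, placing us squarely in the setting of Corollary~\ref{cor:positive}.

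First I would derive~\eqref{eq:Lyapunov-condition-3}. Applying $P^\dagger$ to both sides of~\eqref{eq:Lyapunov-condition} (using monotonicity of $P^\dagger$) and then invoking the drift hypothesized for $P^\dagger$ yields
\begin{equation*}
P^\dagger P V \leq (1-\lambda)^2 V + (1-\lambda)\, b\, \mathds{1}_K + b\, P^\dagger \mathds{1}_K.
\end{equation*}
Bounding $P^\dagger \mathds{1}_K \leq P^\dagger \mathds{1} = \mathds{1} \leq \mathds{1}_K + V/R$ via Assumption~\ref{asmpt:level-set} (implicitly needed here, just as in the proof of Proposition~\ref{lem:P-squared}) and collecting coefficients delivers $\lambda'=\lambda(\tfrac{3}{2}-\lambda)$ and $b'=(2-\lambda)b$. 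The minorization~\eqref{eq:minorization-3} is shorter: applying $P^\dagger$ to~\eqref{eq:assumption2} and then invoking the minorization of $P^\dagger$ with $A=K$,
\begin{equation*}
P^\dagger P \mathds{1}_A(x) \geq \alpha\, \nu(A)\, P^\dagger \mathds{1}_K(x) \geq \alpha^2\, \nu(K)\, \nu(A)\, \mathds{1}_K(x),
\end{equation*}
giving $\alpha' = \alpha^2 \nu(K)$.

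Finally, applying Corollary~\ref{cor:positive} to the reversible, positive semi-definite operator $P^\dagger P$ (which satisfies (v4) with the triple $(\lambda',b',\alpha')$ just derived) yields $\|P^\dagger P\|_{L^2_0(\pi)} \leq 1-\beta^+$ with the stated $\beta^+$. The $C^*$-identity $\|P^\dagger P\|_{L^2_0(\pi)} = \|P\|_{L^2_0(\pi)}^2$, combined with the fact that $L^2_0(\pi)$ is invariant under both $P$ and $P^\dagger$ (since each preserves $\pi$), then gives~\eqref{eq:spec-gap-result-3}. The main point to check is this last step of bookkeeping on $L^2_0(\pi)$: one must verify that restriction to the zero-mean subspace does not spoil the operator identity---i.e.\ that the norm of $P^\dagger P$ on $L^2_0(\pi)$ still equals the square of the norm of $P$ on $L^2_0(\pi)$---which follows from invariance of $L^2_0(\pi)$ under both operators. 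Everything else is an essentially line-by-line adaptation of Proposition~\ref{lem:P-squared}.
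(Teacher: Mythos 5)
Your proposal is correct and follows essentially the same route as the paper: apply $P^\dagger$ to the drift and minorization inequalities for $P$, invoke the hypotheses on $P^\dagger$ together with the bound $P^\dagger\mathds{1}_K\leq\mathds{1}\leq\mathds{1}_K+V/R$, and then apply Corollary~\ref{cor:positive} to the reversible, positive semi-definite operator $P^\dagger P$ followed by $\|Pf\|_{2,\pi}^2=\lr{f}{P^\dagger Pf}_\pi\leq\|P^\dagger P\|_{L^2_0(\pi)}\|f\|_{2,\pi}^2$. Your observation that Assumption~\ref{asmpt:level-set} is implicitly needed (the proposition statement omits it, but the analogous step in Proposition~\ref{lem:P-squared} requires it) is accurate, and your care about the invariance of $L^2_0(\pi)$ under both $P$ and $P^\dagger$ is a detail the paper leaves unstated.
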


\medskip

\begin{proof}
%Amir -- write a complete proof.  You may skip a few calculation
%steps but describe all steps.    
The proof of~\eqref{eq:Lyapunov-condition-3}-\eqref{eq:minorization-3}
is entirely analogous to the proof of Proposition~\ref{lem:P-squared}. It follows
from applying $P^\dagger$ to the
inequalities~\eqref{eq:Lyapunov-condition}-\eqref{eq:assumption2} upon
using the fact that $P^\dagger$ also satisfies the
inequalities~\eqref{eq:Lyapunov-condition}-\eqref{eq:assumption2}. 
The bound~\eqref{eq:spec-gap-result-3} then follows from
%AMIR: check.  I changed Theorem 1 to Corollary 1.
application of Corollary~\ref{thm:Lyapunov-Poincare} to $P^\dagger P$ which is 
reversible, positive-definite, and satisfies the  conditions~\eqref{eq:Lyapunov-condition-3}-\eqref{eq:minorization-3}.
% Therefore, % according to Theorem~\ref{thm:Lyapunov-Poincare}, Poincar\'e inequality~\eqref{eq:poincare_+} holds with $P$ replaced by $P^\dagger P$, 
%\begin{equation*}
%\lr{f}{(I-P^\dagger P )f}\geq \beta_+ \|f\|_{2,\pi},\quad \forall f\in L^2_0(\pi),
%\end{equation*}
%%This implies the spectral gap $\|P\|_{L^2_0(\pi)} \leq (1-\beta_+)^{\frac{1}{2}}$ because
%which implies 
%%the spectral gap
%\begin{align*}
%\|Pf\|^2_{2,\pi}&=\lr{f}{P^\dagger P f}_\pi  
%\leq (1-\beta_+) \|f\|^2_{2,\pi}.
%\end{align*}
%concluding spectral gap~\eqref{eq:spec-gap-result-3}. 
\end{proof}

\section{Examples}\label{sec:example}
\subsection{Ornstein-Uhlenbeck process}
Consider the discrete-time Markov chain $\{X_n\}_{n\geq 0}$
taking values in $\Re$ that evolves according to
\begin{equation*}
X_{n+1} = (1-a)X_n + \sigma B_n,
\end{equation*}
where $a\in (0,1)$, $\sigma>0$ and $\{B_n\}_{n\geq 0}$  are
independent Gaussian random variables.  The associated Markov operator 
\begin{align*}
Pf(x) &= \Expect[f((1-a)x+\sigma B_1)]
%\\&=\int_{\Re}\frac{e^{-\frac{(y-(1-a)x)^2}{2\sigma^2}}}{\sqrt{2\pi\sigma^2}}f(y)\ud y
\\&=\int_{\Re}(2\pi\sigma^2)^{-\frac{1}{2}} \exp(-\frac{(y-(1-a)x)^2}{2\sigma^2})f(y)\ud y,
%\\
%&= \int_{\Re}\frac{e^{-(1-a)^2\frac{(y-x)^2}{2\sigma^2}}}{\sqrt{2\pi\sigma^2}}f(y) \pi(y)\ud y,
\end{align*}
with a reversible Gaussian invariant measure 
\begin{align*}
\ud \pi(x)=(\frac{2\pi \sigma^2}{a^2})^{-\frac{1}{2}}\exp({-\frac{x^2}{2\frac{\sigma^2}{a^2}}})\ud x.
\end{align*}
The Markov operator $P$ is an example of the Ornstein-Uhlenbeck Diffusion semigroup~\cite[Sec. 2.7.1]{bakry2013} with spectrum 
\begin{equation*}
\lambda_n = (1-a)^n,\quad n=0,1,\ldots
\end{equation*}
yielding the spectral gap $\|P\|_{L^2_0(\pi)}=1-a$. 

Our goal in this example is to apply the results of this paper to
obtain a bound for the spectral gap of $P$ and compare it to the exact
spectral gap.  Consider the Lyapunov function $V(x)= 1+x^2$. Then,
\begin{align*}
PV(x) &= 1+ (1-a)^2x^2 + \sigma^2 \\&\leq (1-a) V + (\sigma^2+1)\mathds{1}_{|x|\leq R}, 
\end{align*} 
with $R^2=\frac{\sigma^2+1}{a(1-a)}$. The minorization condition~\eqref{eq:assumption2} also holds:
\begin{align*}
P\mathds{1}_A(x) &= \mathbb P\{(1-a)x + \sigma B_1 \in A \} \\&\geq \alpha \mathbb P\{\frac{\sigma}{\sqrt{2}}B_1 \in A\} \mathds{1}_{|x|\leq R},
\end{align*}
where $\alpha=\exp(-\frac{\sigma^2+1}{\sigma^2}\frac{1-a}{a})$. 
%Hence, the
%conditions~\eqref{eq:Lyapunov-condition}-\eqref{eq:assumption2} of
%Assumption~\ref{asmpt:Lyapunov} hold. 
Since $P$ is a positive definite operator on $L^2(\pi)$,
Corollary~\ref{cor:positive} applies and one obtains 
% the spectral gap
\begin{equation*}
\|P\|_{L^2_0(\pi)} \leq  1-\frac{a}{1+(\sigma^2+1)\exp(\frac{\sigma^2+1}{\sigma^2}\frac{1-a}{a})}.
\end{equation*}
This is a conservative bound based on the exact spectral gap. The
bound may be improved with another choice of Lyapunov function
(e.g., $\exp(|x|)$). In general, it is known that the Lyapunov method
is only able to provide a conservative bound for the spectral gap; cf.~\cite[pp. 203]{bakry2013}.

\subsection{Diffusion map}
The diffusion map $T_\epsilon$ is a Markov operator defined as
\begin{equation}\label{eq:Teps}
T_\epsilon (f) (x):=  \frac{\int_{\Re^d} \geps(x-y)e^{-U(y)}f(y)\ud y }{ \int_{\Re^d} \geps(x-y)e^{-U(y)}\ud y},
\end{equation}
where $\geps(z) = \exp^{-\frac{|z|^2}{4\epsilon}}$ is the Gaussian
kernel, $U(x)$ is a potential function of sufficient regularity, and
$\epsilon>0$ is a positive parameter. The diffusion map was introduced
and studied in spectral clustering  literature as asymptotic limit of
graph Laplacian matrix~\cite{coifman,hein-consistency-2005}. Explicit
bounds on the spectral gap of $\Teps$ are important for analysis of
diffusion map-based algorithms such as the gain function approximation
algorithm in
the feedback particle filter.~\cite{taghvaei2019diffusion}.

%Importantly, $\Teps$ can be used to approximate the Markov diffusion semigroup associated with the stochastic differential equation
%\begin{equation*}
%\ud X_t  = -2\nabla U(X_t) \ud t +\sqrt{2} \ud B_t,
%\end{equation*}
%where $B_t$ is the standard Brownian motion~\cite{coifman,taghvaei2019diffusion}. 

The spectral gap is obtained via an application of
Corollary~\ref{cor:positive}. 
It is straightforward to check that $\Teps$ is a Markov operator with
reversible invariant probability density 
\[\pi (x) := \gamma
  e^{-U(x)}\int \geps(x-y)e^{-U(y)} \ud y,
\]  
where $\gamma$ is the normalization constant.  Moreover, $\Teps$ is positive-definite because 
\begin{equation*}
\lr{f}{\Teps f}_{\pi} = \int_{\Re^d} \geps(x-y) (e^{-U}f)(x)(e^{-U}f)(y) \ud x  \ud y \geq 0,
\end{equation*}
for all $f\in L^2(\pi)$.
% , where the positive-definiteness of the heat
% semigroup $f \mapsto \int \geps(x-y)f(y)\ud y$ is used. 
It remains to verify the Lyapunov
conditions~\eqref{eq:Lyapunov-condition}-\eqref{eq:assumption2}. This
requires additional assumptions on the potential function $U(x)$, an
example of which appears in the following Proposition. 
%The proof appears in the Appendix.
\begin{proposition}\label{prop:Teps}
	Consider the diffusion map operator~\eqref{eq:Teps}.  Suppose
        $U$ is bounded from below and twice continuously differentiable
        with a bounded Hessian $\|\nabla^2 U\|_\infty < \infty $. % where $\nabla^2 U$ is the Hessian of $U$
Also, suppose $\exists \; \lambda_0,R>0$ such that
	\begin{equation}\label{eq:assump-U}
	\frac{1}{2}|\nabla U(x)|^2 \geq  \lambda_0 U(x) + \|\nabla ^2U\|_\infty,~~ \forall~ |x|\geq R 
	\end{equation}
	Then, for all $\epsilon \in(0,\frac{1}{4\|\nabla^2 U\|_\infty })$ the Lyapunov conditions~\eqref{eq:Lyapunov-condition}-\eqref{eq:assumption2} hold and $\Teps$ admits a spectral gap  
	\begin{equation}\label{eq:gap-Teps}
	\|\Teps\|_{L^2_0(\pi)}\leq 1- \frac{\epsilon \lambda_0 }{1+\frac{2 \epsilon b_0 }{\alpha}}
	\end{equation}
	where 
%	satisfies with constants 
	\begin{align*}
%	\lambda&= \epsilon \kappa, \\
	b_0&= \|\nabla^2 U\|_\infty + \max_{\|x\|\leq R} \{\lambda_0 U(x) - \frac{1}{2}|\nabla  U(x)|^2\},\\
	\alpha &= \min_{|x|\leq R} \frac{1}{\sqrt{2}}e^{-2\epsilon|\nabla U(x)|^2 -3\epsilon\|\nabla^2 U\|_\infty - \frac{2}{\sigma^2}(x-\sigma^2\nabla U(x))^2 }. 
	\end{align*}
	and 
%	the set $K=\{x\in \Re^d;~\|x\|\leq R\}$, probability measure $\nu(A) = \mathbb{P}\{\frac{\sigma}{\sqrt{2}}B_1 \in A\}$, and 
$\sigma^2=\frac{2\epsilon}{1+2\|\nabla ^2 U\|_\infty \epsilon}$.  
\end{proposition}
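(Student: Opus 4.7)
The plan is to verify Assumption~\ref{asmpt:Lyapunov} for $T_\epsilon$ with the set $K = \{x \in \Re^d : |x| \leq R\}$, drift constants $\lambda = \epsilon\lambda_0$ and $b = \epsilon b_0$, and a minorization probability measure $\nu$ whose concrete form yields the stated constant $\alpha$. Since $T_\epsilon$ has already been shown to be reversible with respect to $\pi$ and positive-definite on $L^2(\pi)$, Corollary~\ref{cor:positive} then delivers the spectral gap~\eqref{eq:gap-Teps} by direct substitution into $\beta_+ = \lambda/(1 + 2b/\alpha)$.

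For the drift inequality I would take $V(x) = U(x) + c$, with $c$ chosen so that $V \geq 1$ (possible because $U$ is bounded below). Writing
\[T_\epsilon V(x) - V(x) = \frac{1}{Z(x)}\int g_\epsilon(x-y) e^{-U(y)} \bigl(U(y) - U(x)\bigr)\, dy,\]
with $Z(x) := \int g_\epsilon(x-y) e^{-U(y)}\, dy$, the plan is to derive the pointwise estimate $T_\epsilon V(x) - V(x) \leq \epsilon\bigl(\|\nabla^2 U\|_\infty - \tfrac{1}{2}|\nabla U(x)|^2\bigr)$. The mechanism is a Taylor expansion of $U(y)$ around $x$ together with the Stein-type identity
\[\int (y-x)\, g_\epsilon(x-y) e^{-U(y)}\, dy = -2\epsilon \int g_\epsilon(x-y) e^{-U(y)} \nabla U(y)\, dy,\]
obtained by integration by parts using $\nabla_y g_\epsilon(x-y) = -(y-x)/(2\epsilon)\, g_\epsilon(x-y)$, plus a companion divergence identity that controls the second moment of $\mu_x := g_\epsilon(x-\cdot) e^{-U(\cdot)}/Z(x)$. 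Outside $K$, hypothesis~\eqref{eq:assump-U} turns this into $T_\epsilon V(x) - V(x) \leq -\epsilon\lambda_0 U(x) \leq -\epsilon\lambda_0 V(x)$; inside $K$, the additive term $\epsilon b_0$ emerges directly from the definition of $b_0$, establishing~\eqref{eq:Lyapunov-condition}.

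For the minorization I would lower-bound the numerator $\int_A g_\epsilon(x-y) e^{-U(y)}\, dy$ by substituting the upper Taylor bound $U(y) \leq U(x) + \nabla U(x)\cdot(y-x) + \tfrac{1}{2}\|\nabla^2 U\|_\infty |y-x|^2$ into the integrand and completing the square in $y$; this produces an $x$-dependent Gaussian of variance $\sigma^2 = 2\epsilon/(1 + 2\epsilon\|\nabla^2 U\|_\infty)$ centered at $x - \sigma^2 \nabla U(x)$. Dividing by a matching upper bound on $Z(x)$ obtained from the reverse Taylor inequality, and comparing to a reference Gaussian $\nu$ centered at the origin with variance chosen strictly smaller than $\sigma^2$ (so that the $y$-quadratic in the exponent of the resulting density ratio is positive definite and the ratio is bounded below pointwise in $y$), I obtain an estimate of the form $g_\epsilon(x-y) e^{-U(y)}/Z(x) \geq \alpha\, \nu(y)$ uniformly for $x \in K$, with $\alpha$ given by the infimum over $K$ of the pointwise bound.

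The main obstacle is the bookkeeping in the minorization step: the variance of $\nu$ must be tuned so that the $y$-quadratic in the exponent of the Gaussian ratio has the right sign, and the residual exponential must then be identified with the stated expression---the prefactor $\tfrac{1}{\sqrt{2}}$ from the ratio of Gaussian normalizations, the factor $-\tfrac{2}{\sigma^2}|x-\sigma^2\nabla U(x)|^2$ from the shift between the two Gaussian centers, and $-2\epsilon|\nabla U(x)|^2 - 3\epsilon\|\nabla^2 U\|_\infty$ from the cross terms accumulated jointly from the numerator lower bound and the $Z(x)$ upper bound. Once this accounting is in place, the conclusion~\eqref{eq:gap-Teps} is immediate from Corollary~\ref{cor:positive} applied to the reversible positive-definite operator $T_\epsilon$.
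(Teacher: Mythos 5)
Your minorization argument is essentially the paper's: Taylor-bound $U(y)$ above and below by quadratics, complete the square to produce a Gaussian of variance $\sigma^2=\frac{2\epsilon}{1+2\|\nabla^2U\|_\infty\epsilon}$ centered at $x-\sigma^2\nabla U(x)$, and compare against a centered reference Gaussian of variance $\sigma^2/2$; the paper does exactly this bookkeeping. For the drift inequality, however, you take a genuinely different route. The paper never touches moments of the tilted measure $\mu_x$: it applies Jensen's inequality in the form $\Teps U\leq\log(\Teps e^{U})=-\log\int \geps(x-y)e^{-U(y)}\ud y$, substitutes the quadratic majorant of $U$, and evaluates a single explicit Gaussian integral, which delivers $\Teps U(x)\leq U(x)-\frac{\sigma^2}{2}|\nabla U(x)|^2-\frac12\log(\frac{\sigma^2}{2\epsilon})$ with the coefficient of $|\nabla U(x)|^2$ exactly $\sigma^2/2\geq\epsilon/2$. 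Your route via the Stein identity $\E_{\mu_x}[y-x]=-2\epsilon\,\E_{\mu_x}[\nabla U(y)]$ is sound in principle, but it forces you to control both $\E_{\mu_x}[|y-x|^2]$ and the mismatch $\E_{\mu_x}[\nabla U(y)]-\nabla U(x)$; the resulting cross term $2\epsilon\|\nabla^2U\|_\infty|\nabla U(x)|\,\E_{\mu_x}[|y-x|]$ and the second-moment contribution (which carries a factor of the dimension $d$ and an $O(\epsilon^2|\nabla U(x)|^2)$ piece) must be absorbed before you reach your claimed pointwise bound $\Teps V(x)-V(x)\leq\epsilon(\|\nabla^2U\|_\infty-\frac12|\nabla U(x)|^2)$. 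This can be done for $\epsilon$ small, but it will not reproduce the exact constants $\lambda=\epsilon\lambda_0$ and $b=\epsilon b_0$ of the statement without either weakening them or strengthening hypothesis~\eqref{eq:assump-U}; the Jensen trick is what lets the paper avoid this entirely.

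One step in your sketch is actually wrong as written. With $V=U+c$ and $c>0$ (which is what you need when $\inf U<1$), the chain ``$\Teps V(x)-V(x)\leq-\epsilon\lambda_0 U(x)\leq-\epsilon\lambda_0 V(x)$'' fails: since $U=V-c<V$, the final inequality points the other way. You must instead normalize $U\geq1$ at the outset (as the paper does, at the cost of re-deriving~\eqref{eq:assump-U} for the shifted potential, possibly with a smaller $\lambda_0$), or keep $V=U$ and argue separately that $V\geq1$. Also note that the additive constant produced on $\{|x|\geq R\}$ must be shown to vanish there, not merely be small, since~\eqref{eq:Lyapunov-condition} requires the excess term to be supported on $K$; your target inequality does give this once hypothesis~\eqref{eq:assump-U} is applied, but only after the sign issue above is repaired. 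The final appeal to Corollary~\ref{cor:positive}, using the reversibility and positive-definiteness of $\Teps$ established in the surrounding text, is correct and matches the paper.
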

\medskip

\begin{remark}
The assumption~\eqref{eq:assump-U} on $U$ is a type of a dissipative
condition for a dynamical systems with drift $\nabla U$~\cite{hale1988asymptotic}. 
It is satisfied by any potential function $U$ that has a quadratic growth
as $|x|\to\infty$. For example, $U(x) =  U_0(x) + \frac{1}{2}\delta
|x|^2$ satisfies this assumption provided $U_0$ is Lipschitz and
$\delta>0$.
\end{remark}
% It is possible to obtain better constants if one considers $T^n_{\frac{1}{n}}$ for  large $n\in \mathbb{N}$.  
%The spectral gap for $\Teps$ is critical in the analysis of diffusion map-based algorithms~\cite{taghvaei2019diffusion}. 

\medskip

\begin{proof}
Without loss of generality assume $U(x)\geq  1$ for all $x$. The
Lyapunov  condition~\eqref{eq:Lyapunov-condition} holds with $V=U$ because
\begin{align*}
 (\Teps & U) (x) \overset{(1)}{\leq } \log( (\Teps e^{U}) (x) ) = -\log (\int \geps(x-y)e^{-U(y)} \ud y)\\
&\overset{(3)}{\leq }  -\log (\int \geps(x-y)e^{-U(x) - \lr{\nabla U(x)}{y-x} - \frac{m}{2}|y-x|^2 } \ud y)\\
&=U(x) - \frac{\sigma^2}{2}|\nabla U(x)|^2 - \frac{1}{2}\log(\frac{\sigma^2}{2\epsilon})
\end{align*} 
where the Jensen's inequality  is used in the first step, and the inequality $U(y)\leq U(x) + \lr{\nabla U(x)}{y-x} + \frac{m}{2}|y-x|^2$ is used in the third step, with 
 $m = \|\nabla^2U\|_\infty$  and $\sigma^2 = \frac{2\epsilon}{1+2m\epsilon}$. Then, using~\eqref{eq:assump-U} 
 \begin{align*}
 (\Teps U) (x) &\leq U(x) - \sigma^2 \lambda_0 U(x) - \frac{1}{2}\log(\frac{\sigma^2}{2\epsilon} ) - \sigma^2 m  \\&\leq (1-\epsilon \lambda_0)U(x),\quad \text{if}~~|x|\geq R
 \end{align*} 
 where $\sigma^2 \geq \epsilon$ and $\log(\frac{\sigma^2}{2\epsilon}) \geq -2\epsilon m$ are used in the second step. This proves Lyapunov condition~\eqref{eq:Lyapunov-condition}, with $\lambda = \epsilon \lambda_0$, $K=\{x\in \Re^d;~|x|\leq R\}$, and $b=\epsilon b_0$. 
 
 The minorization condition~\eqref{eq:assumption2} holds because 
 \begin{align*}
 \Teps \mathds{1}_A(x) &= \frac{\int_A \geps(x-y)e^{-U(y)}\ud y}{\int_{\Re^d} \geps(x-y)e^{-U(y)}\ud y}\\
 &\geq \frac{e^{\frac{\epsilon}{2} |\nabla U(x)|^2-
   \frac{m\epsilon}{2}} {\sf P}(x-\sigma^2\nabla U(x) + \sigma B_1 \in A) }{e^{2\epsilon|\nabla U(x)|^2 + 2m\epsilon}}\\
 &\geq \alpha {\sf P}(\frac{\sigma}{\sqrt{2}} B_1\in A ),\quad \text{if}~~|x|\leq R
 \end{align*}
 
 This proves the Lyapunov conditions~\eqref{eq:Lyapunov-condition}-\eqref{eq:assumption2}, which together with the fact that $\Teps$ is reversible and positive-definite, proves the spectral gap~\eqref{eq:gap-Teps} by Corollary~\ref{cor:positive}.
\end{proof}

%{
%\color{blue} 
%\subsection{OMIT? Discrete-time Langevin equation}
%This example serves to illustrate the limitations of the application of the results presented in this paper to non-reversible Markov chains.  
%Consider the discrete-time Markov chain $\{X_n\}_{n\geq 0}$ governed by the discretized Langevin equation
%\begin{equation}
%X_{n+1} = X_n - \Delta t \nabla U(X_n) + \sqrt{2\Delta t}B_n
%\end{equation}
%where $\Delta t$ is the time step-size, $U$ is the potential function, and $\{B_n\}_{n\geq 0}$ are independent standard Gaussian random variables. Under suitable conditions on $U$, the Markov chain admits a unique stationary distribution~\cite[Prop. 2]{durmus2019high}. However, the Markov chain is not reversible and the results of of Section~\ref{sec:main} are not directly applicable. It is also difficult to prove the Lyapunov condition for the adjoint operator and apply Proposition~\ref{prop:non-reversible}. A way to get around this is to consider the adjusted Langevin algorithm~\cite{rossky1978brownian} which introduces an additional Metropolis-Hasting sampling step in the algorithm. With the adjustment, the Markov-chain is reversible,  and it is possible to show an spectral gap in $L^2$ by establishing the Lyapunov condition for the adjusted process. Note that the existing results such as~\cite{hairer2011yet} do not have this limitation, because they are applicable to non-reversible Markov chains as well.  
%}
\section{Conclusion}\label{sec:conclusion}
In this paper, a straightforward analytical approach is presented to
establish stochastic stability starting from the Lyapunov drift
condition (v4) (Theorem~\ref{thm:Lyapunov-Poincare}).  A key message
of this paper is that the Lyapunov Foster drift condition (v4), in of
itself, only implies a bound on the spectral gap from eigenvalue at
$1$.  This is formalized in this paper as a relationship between
condition (v4) and the Poincare inequality for the operator $I-P$
(Theorem~\ref{thm:Lyapunov-Poincare}).  From this main theorem, two sets of bounds are
obtained here under certain hypotheses (Corollary~\ref{cor:positive} and
Proposition~\ref{lem:P-squared}).  An extension to the non-reversible
chain is also described in Proposition~\ref{prop:non-reversible}.  Two
illustrative examples are presented.  The diffusion map example is of
independent interest for analysis of gain function approximation in
the feedback particle filter.

\bibliographystyle{plain}
\bibliography{MC-stability-ref} 
%\bibliography{bibfiles/ref,bibfiles/fpfbib,bibfiles/fpfbib2,bibfiles/fpfrelated,bibfiles/meanfield,bibfiles/meanfield_v2,bibfiles/Optimization}

\end{document}